\documentclass[11pt]{article}
\pdfoutput=1
\usepackage{url}
\usepackage{amsmath}
\usepackage{amssymb}
\usepackage{amsfonts}
\usepackage{euscript}
\usepackage{theorem}
\usepackage[all]{xypic}
\usepackage[utf8]{inputenc}
\usepackage{bbding} 

\usepackage{tikz}
\usetikzlibrary{arrows,matrix}

\newtheorem{theorem}{Theorem}[section]
\newtheorem{proposition}[theorem]{Proposition}
\newtheorem{lemma}[theorem]{Lemma}
\newtheorem{corollary}[theorem]{Corollary}

{\theorembodyfont{\rmfamily}
 \newtheorem{definition}[theorem]{Definition}
}

\newenvironment{proof}{\medskip\noindent\emph{Proof.}}{\hfill$\Box$\medskip}

\newenvironment{proofof}[1]{\medskip\noindent\emph{Proof of #1.}}{\hfill$\Box$\medskip}

\newcommand{\into}{\hookrightarrow}

\newcommand{\epito}{\twoheadrightarrow}
\newcommand{\pow}[1]{\mathcal{P}(#1)}
\newcommand{\iso}{\cong}

\newcommand{\id}[1]{\mathrm{id}_{#1}}


\newcommand{\set}[1]{\{#1\}}
\newcommand{\such}{\mid}
\newcommand{\lthen}{\Rightarrow}
\newcommand{\liff}{\Leftrightarrow}

\newcommand{\la}{\langle}
\newcommand{\ra}{\rangle}

\newcommand{\tbigwedge}{\bigvee\nolimits}
\newcommand{\tbigvee}{\bigwedge\nolimits}
\newcommand{\tsup}{\sup\nolimits}

\newcommand{\all}[3]{\forall\, #1 \,{\in}\, #2\,.\left(#3\right)}

\newcommand{\xall}[3]{\forall\, #1 \,{\in}\, #2\,.\,#3}
\newcommand{\xsome}[3]{\exists\, #1 \,{\in}\, #2\,.\,#3}

\newcommand{\xuall}[2]{\forall\, #1\,.\,#2}

\newcommand{\Ch}[1]{\mathrm{Ch}({#1})}

\newcommand{\Prog}[1]{\mathrm{Prog}(#1)}

\newcommand{\tuple}[2]{\langle #1 \rangle}

\newcommand{\up}[1]{{\uparrow}#1}

\newcommand{\cat}[1]{\mathcal{#1}}

\newcommand{\arr}{{\cdot \rightarrow \cdot}}
\newcommand{\E}{\cat{E}}
\newcommand{\EBW}{\cat{E}_\BW}
\newcommand{\EBWPf}{{\EBW [P,f]}}
\newcommand{\Efree}{\cat{E}_\textit{free}}
\newcommand{\Eff}{\mathsf{Eff}}
\newcommand{\F}{\cat{F}}
\newcommand{\Set}{\mathsf{Set}}
\newcommand{\Sets}{\Set}
\newcommand{\W}{\mathcal{W}}
\newcommand{\RT}[1]{\mathsf{RT}(#1)}

\newcommand{\BW}{\textit{BW}}
\newcommand{\ev}{\operatorname{ev}}
\newcommand{\im}{\operatorname{im}}
\newcommand{\rk}{\operatorname{rk}}

\newcommand{\N}{\mathbb{N}}
\newcommand{\On}{\mathcal{O}} 
\newcommand{\Onwk}{\mathcal{O}'} 
\newcommand{\LL}{\mathcal{L}}

\newcommand{\restrict}[1]{|_{#1}}

\newcommand{\univ}{\textit{univ}}

\begin{document}

\title{On the Bourbaki-Witt Principle in Toposes}

\author{
Andrej Bauer\\
University of Ljubljana, Slovenia\\
\texttt{Andrej.Bauer@andrej.com}
\and
Peter LeFanu Lumsdaine\\
Dalhousie University, Halifax, Canada\\
\texttt{p.l.lumsdaine@mathstat.dal.ca}
}

\maketitle

\begin{abstract}
The Bourbaki-Witt principle states that any progressive map on a
chain-complete poset has a fixed point above every point.  It is
provable classically, but not intuitionistically.

We study this and related principles in an intuitionistic setting.
Among other things, we show that Bourbaki-Witt fails exactly when the
trichotomous ordinals form a set, but does not imply that fixed points
can always be found by transfinite iteration.  Meanwhile, on the side
of models, we see that the principle fails in realisability toposes,
and does not hold in the free topos, but does hold in all cocomplete
toposes.
\end{abstract}

\section{Introduction}
\label{sec:introduction}

The Bourbaki-Witt
theorem~\cite{bourbaki49:_sur_le_theor_de_zorn,witt51:_beweis_zum_satz_von_m}
states that a progressive map $f : P \to P$ on a chain-complete
poset~$P$ has a fixed point above every point. (A map is
\emph{progressive} if $x \leq f(x)$ for all $x \in P$.)
A classical proof of the Bourbaki-Witt theorem constructs the
increasing sequence
\begin{equation*}
  x \leq f(x) \leq f^2(x) \leq \cdots \leq f^\omega(x) \leq
  f^{\omega+1}(x) \leq \cdots
\end{equation*}
where chain-completeness is used at limit stages. If the sequence
is indexed by a large enough ordinal, it must stabilise, giving a fixed point
of~$f$ above~$x$.

It has been observed recently by the first author~\cite{Bauer:On_the_Failure} that in the
effective topos there is a counterexample to the Bourbaki-Witt theorem, as well as to the
related Knaster-Tarski theorem. An earlier result of Rosolini~\cite{RosoliniG:modpti}
exhibits a model of intuitionistic set theory in which the (trichotomous) ordinals form a
set, and since the successor operation has no fixed points, this also provides a
counterexample to intuitionistic validity of the Bourbaki-Witt theorem.

The counterexamples bury any hope for an intuitionistic proof of the Bourbaki-Witt
theorem. However, several questions still remain. Is the theorem valid in other toposes?
How is it linked with the existence of large enough ordinals? How does it compare to
Knaster-Tarski and other related fixed-point principles? We address these questions in the
present paper.

\subsection{Overview}
\label{subsec:overview}

After laying out the setting in Section~\ref{sec:preliminaries}, we begin in
Subsection~\ref{subsec:related-fixed-point} by summarising the relationships between
various fixed-point principles of the same form as the Bourbaki-Witt principle. In
Subsection~\ref{subsec:subtleties}, we discuss several classically equivalent formulations
of the Bourbaki-Witt principle, which turn out to be intuitionistically equivalent as
well. Likewise, several ways of stating that the Bourbaki-Witt theorem fails are
intuitionistically equivalent. In Subsection~\ref{subsec:ordinals}, we investigate the
connection between the Bourbaki-Witt principle and iteration along ordinals, and prove that
failure of the principle is equivalent to the trichotomous ordinals forming a set.

In Section~\ref{sec:models}, we change tack and investigate validity of the Bourbaki-Witt
principle in various toposes. First we show that realisability toposes contain
counterexamples to the principle. From this we conclude that the principle cannot hold in
the free topos, as there is a definable chain-complete poset with a definable progressive
map which is interpreted as a counterexample in the effective topos. Next we show that the
Bourbaki-Witt principle transfers along geometric morphisms, and hence its validity in the
category of classical sets implies validity in cocomplete toposes, so in particular in
Grothendieck toposes. Finally, we show by topos-theoretic means that while the Bourbaki-Witt
principle does imply that the ordinals cannot form a set, it does not imply that
fixed-points can always be found by iteration along ordinals, as they can classically.


\section{Preliminaries}
\label{sec:preliminaries}



The content of this paper takes place in two different logical settings. In the first
setting we put on our
constructive hats and prove theorems in intutionistic mathematics. Our proofs are written
informally but rigorously in the style of Errett Bishop (but without countable choice).
They can be interpreted in any elementary topos with natural numbers
object~\cite{lambek-and-scott, maclane92:_sheav_geomet_logic}, or in an intuitionistic set
theory such as IZF~\cite{Aczel-Rathjen:Notes_on_CST}. Since unbounded quantification is
not available in topos logic, statements referring to all structures of a certain kind are to
be interpreted as schemata, as is usual in that setting. When we meet a statement with an inner
unbounded quantifer, we discuss it explicitly. Intuitionistic set theories do not suffer
from this complication.

In the second setting we put on our categorical logicians' hats and prove meta-theorems
about provability statements and topos models. In these arguments we use classical
reasoning when necessary, including for Subsection~\ref{subsec:realisability} the Axiom of
Choice.


Let us recall some basic notions and terminology.
If $P$ is a poset, a \emph{chain} in $P$ is a subset $C \subseteq P$
such that for all $x, y \in C$, $x \leq y$ or $y \leq x$. The set of chains in $P$
is denoted by $\Ch{P}$.
A subset $D \subseteq P$ is \emph{directed} when every finite subset of~$D$,
including the empty set, has an upper bound in $D$; equivalently, if $D$ is
inhabited and every two elements in $D$ have a common upper bound in $D$.

A poset $P$ is \emph{chain-complete} if every chain in $P$ has a
supremum, and is \emph{directed-complete} if every directed subset of~$P$ has
a supremum. 
Any chain-complete poset is inhabited by the supremum of the empty chain, 
whereas a directed-complete poset may be empty.  However, any directed-complete 
poset with a bottom element is chain-complete: if $C$ is a chain, then 
$C \cup \set{\bot}$ is directed, and its supremum gives a supremum for $C$.
Since suprema are unique when they exist, a poset is chain-complete precisely
when it has a supremum operator $\sup : \Ch{P} \to P$.

An endofunction $f:P \to P$ is called \emph{progressive} (sometimes \emph{inflationary} or
\emph{increasing}) if $x \leq f(x)$ for every $x \in P$.
A point $x \in P$ is \emph{fixed} by $f$ if $f(x) = x$, \emph{pre-fixed} if $f(x) \leq x$,
and \emph{post-fixed} if $x \leq f(x)$.

The \emph{Bourbaki-Witt principle} is the statement
\begin{quote}
  \emph{``A progressive map on a chain-complete poset has a fixed
    point above every point.''}
\end{quote}


\section{Bourbaki-Witt in the constructive setting}
\label{sec:constructive}

\subsection{Related fixed-point principles}
\label{subsec:related-fixed-point}

The Bourbaki-Witt principle is one of a family of fixed-point principles,
obtained by combining either progressive or monotone maps with either complete,
directed-complete, or chain-complete posets. Three of the six combinations can
be proved intuitionistically, as follows.

\begin{theorem}[Tarski~\cite{tarski55:_lattic_theor_fixpoin_theor_and_its_applic}]
  Any monotone map on a complete lattice has a fixed point above every post-fixed point.
\end{theorem}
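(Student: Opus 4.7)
The plan is to follow the standard Tarski argument, which turns out to use no case splits or non-constructive reasoning and so goes through unchanged in the intuitionistic setting. Given a monotone $f : L \to L$ on a complete lattice $L$ and a post-fixed point $a \leq f(a)$, I would form the set of post-fixed points lying above $a$,
\[ S = \set{x \in L \such a \leq x \text{ and } x \leq f(x)}, \]
which contains $a$, and take $x^\ast = \sup S$, using completeness of $L$. The claim is that $x^\ast$ is a fixed point of $f$ above $a$.

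First I would check that $x^\ast \in S$. For any $x \in S$, monotonicity of $f$ gives $x \leq x^\ast$, hence $f(x) \leq f(x^\ast)$, and combining with $x \leq f(x)$ yields $x \leq f(x^\ast)$. Thus $f(x^\ast)$ is an upper bound of $S$, so $x^\ast \leq f(x^\ast)$; together with $a \leq x^\ast$ (since $a \in S$) this puts $x^\ast$ in $S$. Next, applying monotonicity to $x^\ast \leq f(x^\ast)$ gives $f(x^\ast) \leq f(f(x^\ast))$, and $a \leq x^\ast \leq f(x^\ast)$, so $f(x^\ast) \in S$ as well. Hence $f(x^\ast) \leq x^\ast$, and combined with $x^\ast \leq f(x^\ast)$ this gives $f(x^\ast) = x^\ast$.

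The main point to verify is that the argument really is intuitionistically valid: it uses only monotonicity, existence of suprema in $L$, and transitivity of $\leq$, with no appeal to excluded middle or to any case analysis on elements of $L$. In particular, there is no significant obstacle to overcome — the statement is essentially the easy member of the family of fixed-point principles discussed in this subsection, and the classical proof is already constructive.
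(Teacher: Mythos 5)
Your argument is correct, but it is the order-dual of the paper's: you take the \emph{supremum} of the set of post-fixed points above $a$, whereas the paper takes the \emph{infimum} of the set $\set{y \such x \leq y \text{ and } f(y) \leq y}$ of \emph{pre}-fixed points above $x$, showing first that this infimum $z$ is itself pre-fixed (since $f(z) \leq f(y) \leq y$ for all $y$ in the set) and then that $f(z)$ also lies in the set, forcing $z \leq f(z)$. The two proofs are structurally mirror images, both intuitionistically valid, and neither is more general; the only substantive difference in what they deliver is that the paper's construction yields the \emph{least} fixed point above the given post-fixed point (which it remarks on explicitly), while yours yields the greatest one. Your discussion of constructivity is accurate for both versions. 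One small wording slip: for $x \in S$, the inequality $x \leq x^\ast$ follows from $x^\ast$ being an upper bound of $S$, not from monotonicity of $f$; monotonicity is what you then apply to conclude $f(x) \leq f(x^\ast)$.
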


\begin{proof}
  Let $f : P \to P$ be such a map and $x \in P$ a post-fixed point, i.e., $x \leq f(x)$.
  Consider the set $S = \set{y \in P \such \text{$x \leq y$ and $f(y) \leq y$}}$ of
  pre-fixed points above~$x$. The infimum $z = \inf S$ is a pre-fixed point because by
  monotonicity $f(z) \leq f(y) \leq y$ for all $y \in S$. But also $x \leq z$, so
  $z$ and $f(z)$ are in $S$, hence $z$ is a post-fixed point as well. Thus $z$ is a
  fixed point of $f$ above $x$, and indeed by construction the least such.
\end{proof}

\noindent
The usual formulation of Tarski's theorem states just that every monotone map has a fixed
point; here we reformulate it to make it more similar to the Bourbaki-Witt theorem, but
the two versions are equivalent.

\begin{theorem}[Pataraia~\cite{pataraia97:_const_proof_of_tarsk_fixed}]
  \label{theorem:pataraia}
  Any monotone map on a directed-complete poset has a fixed point above every
  post-fixed point.
\end{theorem}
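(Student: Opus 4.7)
The plan is to follow Pataraia's original argument, reducing to a case with a bottom element and a progressive $f$, and then exploiting the fact that the monoid of monotone progressive self-maps has a maximum.

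First I would reduce to the case where $P$ has a bottom element and $f$ is progressive. Given $f : P \to P$ monotone and $x \leq f(x)$, consider $Q = \set{y \in P \such x \leq y \text{ and } y \leq f(y)}$. This $Q$ is a sub-dcpo of $P$: for a directed $D \subseteq Q$, the supremum $\sup D$ lies above $x$, and by monotonicity $\sup D \leq \sup_{d \in D} f(d) \leq f(\sup D)$, so $\sup D \in Q$. Also $f$ restricts to $Q$, since if $x \leq y \leq f(y)$ then $x \leq f(x) \leq f(y)$ and $f(y) \leq f(f(y))$. Finally $x \in Q$ is a bottom element. So we may assume $P = Q$, i.e.\ $P$ has a least element $x$ and $f$ is both monotone and progressive.

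Next, let $M$ be the set of monotone progressive endofunctions of $P$, ordered pointwise. Pointwise suprema of directed families of monotone progressive maps are again monotone and progressive, so $M$ is a dcpo. The key observation is that $M$ is also \emph{directed as a poset}: the identity is in $M$, and for any $g, h \in M$ the composite $g \circ h$ is monotone and progressive, and sits above both $g$ and $h$ pointwise (since $h$ progressive gives $g \circ h \geq g \circ \id{} = g$, and $g$ progressive gives $g \circ h \geq h$). Therefore $M$ has a maximum element $g_0 = \sup M$.

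Now $f \circ g_0$ is again monotone and progressive, so lies in $M$; since $f$ is progressive, $f \circ g_0 \geq g_0$, but $g_0$ is the maximum of $M$, so $f \circ g_0 = g_0$. In particular $f(g_0(x)) = g_0(x)$, and $g_0(x) \geq x$ since $g_0$ is progressive, producing a fixed point of $f$ above $x$.

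I expect the main subtlety to be verifying that the constructions pass muster intuitionistically, especially that $M$ really is a set definable by bounded comprehension (it sits inside $P^P$) and that directedness of $M$ holds internally — the directedness argument via composition is fully constructive, which is what makes Pataraia's proof work where Bourbaki--Witt's does not.
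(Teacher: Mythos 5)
Your proposal is correct and follows essentially the same route as the paper's proof (the Pataraia/Dacar argument): restrict to a sub-dcpo of post-fixed points, observe that the monotone progressive endomaps form a directed dcpo via composition, and evaluate its top element, which is forced to be fixed under postcomposition with $f$. The only cosmetic difference is that you cut $Q$ down to the post-fixed points above $x$ from the start, whereas the paper keeps all post-fixed points and evaluates the top map at $x$ at the end.
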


\begin{proof}
  We summarise the proof as given by Dacar~\cite{Dacar:Join_Induction}. Given a monotone
  $f : P \to P$ on a directed-complete poset $P$, let $Q = \set{x \in P \such x \leq
    f(x)}$ be the subposet of post-fixed points. The set
  \begin{equation*}
    M = \set{ g : Q \to Q \such \text{$g$ is monotone and progressive}}
  \end{equation*}
  contains the restriction of $f$ to $Q$, is directed-complete under the pointwise
  ordering, and is itself directed: it contains the identity, and for any $g, h \in M$,
  the composite $g \circ h$ gives an upper bound of $g$ and $h$. Thus $M$ has a top
  element $t$, which must satisfy $g \circ t = t$ for all $g \in M$, hence $t(x)$ is a
  fixed point of~$f$ above~$x$ for any $x \in Q$.
\end{proof}

The third theorem which can be proved intuitionistically combines progressive maps and
complete lattices, but it is completely trivial as the top element is always a fixed point
of a progressive map. One might be tempted to save the theorem by proving that a
progressive map on a complete lattice has a \emph{least} fixed point, until one is shown a
counterexample.

The remaining three combinations claim existence of fixed points of a progressive map on a
chain-complete poset, a progressive map on a directed-complete poset, and a monotone map on
a chain-complete poset. The first of these is the Bourbaki-Witt principle, which we study in this
paper. Judging from Theorem~\ref{theorem:pataraia}, one might suspect that the second would have an
intuitionistic proof, but in fact Dacar~\cite{dacar08:_suprem_of_famil_of_closur_operat} has observed
that it is equivalent to the Bourbaki-Witt principle.

\begin{theorem}[Dacar]
  \label{theorem:bw-cc-iff-bw-dc}
  The following are intuitionistically equivalent:
  \begin{enumerate}
  \item
    \label{item:ccpo}
    Any progressive map on a chain-complete poset has a fixed
    point above every post-fixed point.
  \item
    \label{item:dcpo}
    Any progressive map on a directed-complete poset has a fixed
    point above every post-fixed point.
  \end{enumerate}
\end{theorem}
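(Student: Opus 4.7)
The plan is to prove each direction by transferring the given data to a poset of the other completeness kind, on which we can invoke the hypothesis.

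For $(2)\Rightarrow(1)$: given a directed-complete poset $P$, progressive $f : P \to P$, and post-fixed point $x$, I would restrict attention to the up-set $\up{x} = \set{y \in P \such x \leq y}$. Progressivity of $f$ ensures that $f$ preserves $\up{x}$. Any directed $D \subseteq \up{x}$ is directed in $P$, and its $P$-supremum lies above any element of $D$ (of which there must be one) and hence in $\up{x}$; so $\up{x}$ is directed-complete. It has bottom $x$, so by the remark in Section~\ref{sec:preliminaries} it is also chain-complete. Applying (1) inside $\up{x}$ to $f\restrict{\up{x}}$ and to $x$ yields the desired fixed point above $x$ in $P$.

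For $(1)\Rightarrow(2)$: given chain-complete $P$, progressive $f$, and post-fixed point $x$, the key idea is to pass to the poset $\Ch{P}$ of chains of $P$ ordered by inclusion. First, $\Ch{P}$ is directed-complete: the union of a directed family of chains is again a chain, since any two of its elements lie in a common member of the family. Chain-completeness of $P$ lets us define $\bar f : \Ch{P} \to \Ch{P}$ by $\bar f(C) = C \cup \set{f(\sup C)}$; by progressivity $f(\sup C)$ is an upper bound for $C$, so $\bar f(C)$ is indeed a chain, and manifestly $\bar f$ is progressive. Applying (2) to $\bar f$ with the (automatically) post-fixed point $\set{x}$ produces a chain $C^{*} \supseteq \set{x}$ with $\bar f(C^{*}) = C^{*}$, equivalently $f(\sup C^{*}) \in C^{*}$. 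Then $f(\sup C^{*}) \leq \sup C^{*}$, while progressivity gives $\sup C^{*} \leq f(\sup C^{*})$; so $\sup C^{*}$ is a fixed point of $f$, and it lies above $x$ because $x \in C^{*}$.

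I do not foresee a serious obstacle. Both $\up{x}$ and $\Ch{P}$ arise by bounded comprehension from the given posets, so the constructions are intuitionistically valid. The one thing requiring care is that our formulation of chain-completeness has to yield a supremum \emph{operator} $\sup : \Ch{P} \to P$ so that $\bar f$ is a genuine function, but this was already noted in the preliminaries.
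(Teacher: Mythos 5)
Your proof is correct and follows essentially the same route as the paper: for one direction you restrict to the up-set $\up{x}$, which is directed-complete with bottom $x$ and hence chain-complete, and for the other you pass to the directed-complete poset of chains with the progressive map $C \mapsto C \cup \set{f(\sup C)}$, exactly as in the paper's proof. The only slip is that your two implications are labelled backwards: the argument you head ``$(2)\Rightarrow(1)$'' assumes (1) and derives (2), and vice versa, so the headers should be interchanged.
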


\begin{proof}
  The direction from chain-complete posets to directed-complete ones is straightforward: 
  if $P$ is directed-complete and $x$ is post-fixed for a progressive $f : P \to P$, then 
  $\set{y \in P \such x \leq y}$ is chain-complete and closed under $f$.
  
  To prove the converse, suppose the statement holds for directed-complete posets, and let
  $f : P \to P$ be a progressive map on a chain-complete poset $P$. The set $C$ of chains
  in~$P$, ordered by inclusion, is directed-complete. The map $F : C \to C$, defined by
  $F(A) = A \cup {f(\sup A)}$, is progressive, so has a fixed point~$B$
  above $\set{x}$. Now $f(\sup B) \in B$ and hence $f(\sup B) \leq \sup B$, showing
  that $\sup B$ is a fixed point of~$f$ above~$x$.
\end{proof}

The last combination is the Knaster-Tarski principle for chain-complete posets:
\begin{quote}
  \emph{``A monotone map on a chain-complete poset has a fixed point above every post-fixed point.''}
\end{quote}
Most of what we show for the Bourbaki-Witt principle in this paper holds almost
without alteration for the Knaster-Tarski principle, with one notable exception.  
As we saw in Theorem~\ref{theorem:pataraia}, the directed-complete version of the
Knaster-Tarski principle is intuitionistically provable, while the directed-complete
version of the Bourbaki-Witt theorem fails in general, as we will see in Section~\ref{sec:models}.

Finally, looking at the relationship \emph{between} the Knaster-Tarski and Bour\-baki-Witt
principles, we have:

\begin{proposition}
  \label{prop:bw-implies-tk}
  The Bourbaki-Witt principle implies the Knaster-Tarski principle.
\end{proposition}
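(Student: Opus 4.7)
The plan is to reduce Knaster-Tarski to Bourbaki-Witt by restricting the ambient poset to the subposet of post-fixed points above the given point, where a monotone map becomes progressive. Specifically, given a monotone $f : P \to P$ on a chain-complete poset with $x \leq f(x)$, I would define
\[
  Q = \set{ y \in P \such \text{$x \leq y$ and $y \leq f(y)$} }
\]
and observe that $Q$ is closed under $f$: if $y \in Q$, then $f(y) \geq f(x) \geq x$ and $f(y) \leq f(f(y))$ by monotonicity, so $f(y) \in Q$.

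Next I would show that $Q$ is chain-complete. Given a chain $C \subseteq Q$, form the augmented chain $C' = C \cup \set{x}$ in $P$ (this is still a chain because every element of $C$ lies above $x$). Let $s = \sup_P C'$, which exists by chain-completeness of $P$. Then $x \leq s$, and for each $y \in C'$ monotonicity gives $y \leq f(y) \leq f(s)$, hence $s \leq f(s)$, so $s \in Q$. One checks $s$ serves as a supremum for $C$ inside $Q$; in particular the empty chain in $Q$ acquires the supremum $x$. Thus $Q$ is chain-complete.

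Finally, the restriction $f|_Q : Q \to Q$ is progressive by the very definition of $Q$. Applying the Bourbaki-Witt principle to $f|_Q$ and the point $x \in Q$ yields a fixed point of $f|_Q$ above $x$, which is a fortiori a fixed point of $f$ above $x$.

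No step is especially hard; the only point requiring care is the treatment of the empty chain in $Q$, which is why the augmentation by $\set{x}$ is needed — one must not assume that the bottom of $P$, which is $\sup_P \emptyset$, lies in $Q$. Once this is handled, the argument is formal and goes through intuitionistically.
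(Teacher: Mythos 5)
Your proof is correct, and it takes a genuinely different route from the paper's. You reduce Knaster--Tarski to Bourbaki--Witt by passing to the subposet $Q = \set{y \in P \such x \leq y \text{ and } y \leq f(y)}$ of post-fixed points above $x$, on which monotonicity turns $f$ into a progressive map; the work then lies in checking that $Q$ is chain-complete, which you do correctly via the augmented chain $C \cup \set{x}$ (and you are right that this augmentation is the one delicate point: $\sup_P \emptyset$ need not lie above $x$, so the empty chain in $Q$ must receive $x$ as its supremum). The paper instead applies Bourbaki--Witt one level up, to the poset of ``nice'' chains (chains on which $f$ is progressive) ordered by inclusion, with the progressive map $C \mapsto C \cup \set{\sup\set{f(y) \such y \in C}}$; a fixed chain then has a top element which is a fixed point of $f$. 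Your argument is the more direct reduction and arguably the more standard one; it exploits the same subposet of post-fixed points that appears in the paper's proofs of the Tarski and Pataraia theorems. The paper's version avoids verifying chain-completeness of a subposet cut out by a condition involving $f$, working instead with the (even directed-complete) poset of chains, and it runs in deliberate parallel with Dacar's argument in Theorem~\ref{theorem:bw-cc-iff-bw-dc}. Both proofs are intuitionistically valid and of comparable length, so the difference is one of organisation rather than strength.
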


\begin{proof}
  Let $f : P \to P$ be a monotone map on
  a chain-complete poset $P$, and suppose $x \leq f(x)$. Say that a chain
  $C \subseteq P$ is \emph{nice} if~$f$ is progressive on~$C$. Then
  the poset of nice chains under inclusion is chain-complete (indeed, directed-complete)
  and has a progressive map $s$, which sends $C$ to
  \begin{equation*}
    s(C) = C \cup \sup \set{f(y) \such y \in C}.
  \end{equation*}
  The Bourbaki-Witt principle gives a fixed-point $C$ of~$s$ above $\set{x}$.
  Then $\sup C$ is a fixed point of~$f$ above~$x$.
\end{proof}

\noindent We do not know whether this implication can be reversed!

  We summarize the intuitionistic provability of the six variants, and implications
between them, in the following diagram (where {\Checkmark}\kern-1.5pt\ stands for ``provable''):
\begin{center}
\small
\begin{tikzpicture}
  [x=2.5\baselineskip,y=2\baselineskip] 
\draw (1,-1.5) node {Progressive}
      (1,-2.5) node {Monotone}
      (3,-0.3) node {Complete}
      (7,-0.3) node [text width=2cm,text centered] {Chain-complete}
      (5,-0.3) node [text width=2cm,text centered] {Directed-complete};
\draw (3,-1.5) node {\Checkmark} 
      (7,-1.5) node {\XSolidBrush} 
      (5,-1.5) node {\XSolidBrush} 
      (3,-2.5) node {\Checkmark}
      (7,-2.5) node {\XSolidBrush}
      (5,-2.5) node {\Checkmark};
\draw (6,-1.5) node [fill=white] {$\Longleftrightarrow$};
\draw (7,-2) node [fill=white] {$\Downarrow$};  
\end{tikzpicture}
\end{center}


\subsection{Equivalent forms of Bourbaki-Witt}
\label{subsec:subtleties}
Bourbaki-Witt may be stated in several slightly different forms, all classically equivalent.
In fact, they turn out to be intuitionistically equivalent as well.

\begin{theorem}
  \label{theorem:equivalent-bw}
  The following are intuitionistically equivalent:
  \begin{enumerate}
  \item \label{lbl:bw} Any progressive map on a chain-complete poset has
    a fixed point above every point.
  \item \label{lbl:bw-inhabited} Any progressive map on a
    chain-complete poset has a fixed point.
  \item \label{lbl:bw-operator} Every chain-complete poset has a fixed-point
    operator for progressive maps.
  \end{enumerate}
\end{theorem}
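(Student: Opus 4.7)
The plan is to establish a cycle $(1)\Rightarrow(2)\Rightarrow(3)\Rightarrow(1)$; the remaining direction $(3)\Rightarrow(2)$ is then immediate by applying the operator to any given map.

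For $(1)\Rightarrow(2)$, any chain-complete poset is inhabited since the empty chain has a supremum, so (1) applied at this bottom element yields a fixed point. For $(3)\Rightarrow(1)$, given a chain-complete $P$, a progressive $f : P \to P$, and a point $x \in P$, I would restrict attention to the up-set $P_{\geq x} = \{y \in P \mid x \leq y\}$. This is chain-complete: a chain $C \subseteq P_{\geq x}$ has as its supremum in $P_{\geq x}$ the value $\sup(C \cup \{x\})$ computed in $P$, which automatically lies above $x$. Progressivity forces $f$ to restrict to an endomap of $P_{\geq x}$, since $x \leq y \leq f(y)$. The fixed-point operator provided by (3) for $P_{\geq x}$, applied to $f|_{P_{\geq x}}$, then supplies a fixed point of $f$ above $x$.

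The substantive step is $(2)\Rightarrow(3)$: one has to upgrade a purely existential statement into a uniform operator, without any appeal to choice. The trick I would use is to apply (2) at a higher type. Let $M$ be the set of progressive endomaps of $P$, and consider the poset $R = P^M$ of all functions $M \to P$ under the pointwise order; since $P$ is chain-complete so is $R$, with suprema computed pointwise. Define $G : R \to R$ by $G(\phi)(f) = f(\phi(f))$. Progressivity of each $f \in M$ immediately gives $G(\phi)(f) \geq \phi(f)$, so $G$ itself is progressive. Applying~(2) to $(R, G)$ produces a fixed point $\phi$ of $G$, which unwinds precisely to the equation $f(\phi(f)) = \phi(f)$ for every progressive $f$; that is, $\phi$ is a fixed-point operator for $P$.

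The main obstacle is spotting the higher-order construction used in $(2)\Rightarrow(3)$; once the function space $P^M$ with the ``apply $f$ pointwise'' endomap is in view, the verification is routine, as are the manipulations in the other two directions. One small point to be careful of is that the set $M$ of progressive endomaps is legitimately available in the topos setting, being cut out of $P^P$ by a bounded condition, so that $R = P^M$ makes sense internally.
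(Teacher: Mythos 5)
Your proposal is correct and follows essentially the same route as the paper: the paper likewise handles $(\ref{lbl:bw})\liff(\ref{lbl:bw-inhabited})$ via the supremum of the empty chain and the restriction to the up-set $\up{x}$, and obtains $(\ref{lbl:bw-inhabited})\lthen(\ref{lbl:bw-operator})$ by applying the hypothesis to the pointwise-ordered exponential $P^{\Prog{P}}$ with the same ``apply $f$'' endomap. The only difference is organisational (you arrange the implications as a cycle rather than two separate equivalences), which changes nothing of substance.
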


\begin{proof}
  Let us first establish the equivalence of the first two statements. Every
  chain-complete poset has a least element, the supremum of the empty chain,
  above which one may seek fixed points. Conversely, a fixed-point of a
  progressive map $f : P \to P$ above $x \in P$ the same thing as a fixed-point
  of $f$ restricted to the chain-complete subposet $\up{x} = \set{y \in P \such
    x \leq y}$.

  The third statement clearly implies the second one. Conversely, suppose the
  second statement holds. Take any chain-complete poset~$P$ and let $\Prog{P}$
  be the set of progressive maps on~$P$. We can endow the exponential
  $P^{\Prog{P}}$ with a chain-complete partial order, defined by
  \begin{equation*}
    \tuple{x_f}{f} \leq \tuple{y_f}{f} \iff \xall{f}{\Prog{P}}{x_f \leq y_f},
  \end{equation*}
  where we write $\tuple{x_f}{f}$ for the element of $P^{\Prog{P}}$ that maps $f$ to
  $x_f$. The endomap $h : P^{\Prog{P}} \to P^{\Prog{P}}$,
  \begin{equation}
    \label{eq:apply-f-map}
    h(\tuple{x_f}{f}) = \tuple{f(x_f)}{f},
  \end{equation}
  is progressive, and so has a fixed point, which is exactly
  the desired fixed-point operator.
\end{proof}

Any of the the statements from Theorem~\ref{theorem:equivalent-bw} may be
interpreted in the internal language of a topos~$\E$. When we do so we refer to
them as the \emph{internal} Bourbaki-Witt principle. One may also consider
\emph{external} versions in which the universal quantifiers range externally
over progressive morphisms, rather than internally over the object of
progressive maps.
A morphism $f : P \to P$ is \emph{progressive} if it is so in the internal logic; equivalently, if $(\id{P}, f) : P \to P \times P$ factors through~$\leq$, viewed as a subobject of $P \times P$.

\begin{theorem}
  The \emph{internal} and \emph{external} Bourbaki-Witt theorems are
  equivalent in a topos~$\E$:
  \begin{enumerate}
  \item \label{lbl:bw-internal}
    Internal: for every chain-complete poset $P$ in~$\E$, the
    statement
    \begin{equation*}
      \xall{f}{P^P}{(\xall{x}{P}{x \leq f(x)}) \lthen \xsome{x}{P}{f(x) = x}}.
    \end{equation*}
    is valid in the internal logic of~$\E$.

  \item \label{lbl:bw-external}
    External: for every chain-complete poset $P$ in~$\E$ and
    every progressive morphism $f : P \to P$ the internal statement
    $\xsome{x}{P}{f(x) = x}$ is valid.
  \end{enumerate}
\end{theorem}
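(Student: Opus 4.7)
The plan is to leverage Theorem~\ref{theorem:equivalent-bw}, which converts an internal universal quantification over $P^P$ into the existence of a single fixed-point operator, so that we need only apply external Bourbaki-Witt to one concrete progressive morphism.

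For (\ref{lbl:bw-internal}) $\Rightarrow$ (\ref{lbl:bw-external}): given an external chain-complete poset $P$ and an external progressive morphism $f : P \to P$, the transpose of $f$ is a global element $\bar{f} : 1 \to P^P$ which internally satisfies progressivity. Instantiating the internal universal quantifier of (\ref{lbl:bw-internal}) at the stage-$1$ generalised element $\bar f$ gives the internal validity of $\xsome{x}{P}{f(x) = x}$.

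For (\ref{lbl:bw-external}) $\Rightarrow$ (\ref{lbl:bw-internal}): fix a chain-complete poset $P$ in~$\E$, and imitate externally the proof of Theorem~\ref{theorem:equivalent-bw}(\ref{lbl:bw-operator}). Form the subobject $\Prog{P} \monoto P^P$ of progressive endomaps, equip the exponential $P^{\Prog{P}}$ with its pointwise order (which is internally chain-complete), and define $h : P^{\Prog{P}} \to P^{\Prog{P}}$ by the formula~\eqref{eq:apply-f-map}. Since $h$ is internally progressive, it is externally progressive in the sense of the paragraph preceding the theorem. Applying external Bourbaki-Witt to $h$ yields internally an element of $P^{\Prog{P}}$ fixed by $h$; unpacked, this is a fixed-point operator for progressive maps on~$P$. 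Re-running the argument of Theorem~\ref{theorem:equivalent-bw} inside the internal logic of~$\E$ then delivers the internal Bourbaki-Witt principle for~$P$.

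The main subtlety is the identification of the externally constructed $\Prog{P}$, $P^{\Prog{P}}$, and $h$ with their internal counterparts appearing in the proof of Theorem~\ref{theorem:equivalent-bw}: this is routine bookkeeping, but must be verified so that the appeal to that theorem's internal interpretation goes through. The virtue of routing through the fixed-point operator is precisely that one applies external Bourbaki-Witt only to the single global morphism $h$, so there is no need to descend to slice toposes or manipulate generalised-element families of progressive maps.
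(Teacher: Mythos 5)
Your proposal is correct and follows essentially the same route as the paper: the forward direction is the obvious instantiation at a global element, and the converse constructs $P^{\Prog{P}}$ with its canonical progressive endomap $h$, applies the external principle to that single morphism, and extracts an internal fixed-point operator as in Theorem~\ref{theorem:equivalent-bw}. No gaps.
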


\begin{proof}
  The internal form obviously implies the external one. Conversely, suppose the
  external form holds, and consider any chain-complete poset $P$
  in~$\E$. As in the proof of Theorem~\ref{theorem:equivalent-bw}, we may
  construct in $\E$ the chain-complete poset $P^{\Prog{P}}$, and the canonical
  progressive morphism~$h$ thereon. By~\eqref{lbl:bw-external}, the statement
  $\xsome{z}{P^{\Prog{P}}}{h(z) = z}$ holds in~$\E$. We now conclude, just as in
  the proof of Theorem~\ref{theorem:equivalent-bw}, that there exists in the
  internal sense a fixed-point operator for~$P$, which implies the internal
  form.
\end{proof}

Similarly, various forms of the \emph{failure} of
Bourbaki-Witt turn out to be equivalent.  The failure of a universal statement is
generally weaker, intuitionistically, than the existence of a specific counterexample; 
and for the negation of the full, unbounded Bourbaki-Witt principle, this seems to be the case.
(Indeed, in topos logic, with no unbounded quantifiers, this negation cannot even be stated.)
However, as soon as the failure is in any way \emph{bounded}, one can construct a counterexample.

\begin{theorem}
  \label{theorem:not-bw-equivalent}
  The following are intuitionistically equivalent:
  \begin{enumerate}
  \item \label{lbl:bw-fail-exists-exists-not}
    There is a chain-complete poset and a progressive map on it
    which has no fixed points.
  \item \label{lbl:bw-fail-exists-not-forall}
    There is a chain-complete poset on which not every
    progressive map has a fixed point.
  \item \label{lbl:bw-fail-not-forall-forall}
    There is a set $\W$ of
    chain-complete posets such that not every progressive map on every poset in
    $\W$ has a fixed point.
 \end{enumerate}
\end{theorem}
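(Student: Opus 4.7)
The plan is to prove \eqref{lbl:bw-fail-exists-exists-not} $\Rightarrow$ \eqref{lbl:bw-fail-exists-not-forall} $\Rightarrow$ \eqref{lbl:bw-fail-not-forall-forall} $\Rightarrow$ \eqref{lbl:bw-fail-exists-exists-not}, with the first two implications being essentially immediate and the last one carrying all the real content. For \eqref{lbl:bw-fail-exists-exists-not} $\Rightarrow$ \eqref{lbl:bw-fail-exists-not-forall}, the same chain-complete poset $P$ and progressive map $f$ witnessing \eqref{lbl:bw-fail-exists-exists-not} show that on $P$ not every progressive map has a fixed point, since $f$ itself is a counterexample. For \eqref{lbl:bw-fail-exists-not-forall} $\Rightarrow$ \eqref{lbl:bw-fail-not-forall-forall}, take $\W = \set{P}$ to be the singleton family containing the chain-complete poset $P$ provided by \eqref{lbl:bw-fail-exists-not-forall}.

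The substantive step is \eqref{lbl:bw-fail-not-forall-forall} $\Rightarrow$ \eqref{lbl:bw-fail-exists-exists-not}: from a family $\W$ on which not every progressive map has a fixed point, we must exhibit a single chain-complete poset with a specific fixed-point-free progressive map. The idea is to mimic the bundling construction used in the proof of Theorem~\ref{theorem:equivalent-bw}\eqref{lbl:bw-operator}, but across all posets in $\W$ simultaneously. Form

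\begin{equation*}
  Q \;=\; \prod_{P \in \W} P^{\Prog{P}},
\end{equation*}

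equipped with the pointwise partial order. Each factor $P^{\Prog{P}}$ is chain-complete pointwise (because $P$ is), and chain-completeness is preserved by arbitrary products, so $Q$ is chain-complete. Define $h : Q \to Q$ coordinatewise, just as in~\eqref{eq:apply-f-map}, by

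\begin{equation*}
  h(\tuple{x_{P,f}}{P,f}) \;=\; \tuple{f(x_{P,f})}{P,f}.
\end{equation*}

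Since each $f \in \Prog{P}$ is progressive on $P$, the map $h$ is progressive on $Q$.

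It remains to check that $h$ has no fixed points. If $\tuple{x_{P,f}}{P,f}$ were a fixed point of $h$, then by definition $f(x_{P,f}) = x_{P,f}$ for every $P \in \W$ and every $f \in \Prog{P}$; that is, every progressive map on every poset in $\W$ would have a fixed point, contradicting the hypothesis~\eqref{lbl:bw-fail-not-forall-forall}. Thus $h$ provides the desired explicit counterexample. The only point requiring any care is that forming the product $\prod_{P \in \W} P^{\Prog{P}}$ is legitimate precisely because $\W$ is given as a \emph{set}; this is exactly what makes the bounded failures \eqref{lbl:bw-fail-exists-not-forall} and \eqref{lbl:bw-fail-not-forall-forall} strong enough to entail the existence of a single counterexample, whereas an unbounded failure of Bourbaki-Witt would not permit such a product.
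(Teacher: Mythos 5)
Your proposal is correct and follows essentially the same route as the paper: the first two implications are noted as immediate, and the circle is closed by forming the product $\prod_{P \in \W} P^{\Prog{P}}$ with the coordinatewise evaluation map, which is exactly the paper's endomap sending $F$ to $(P,f) \mapsto f(F\,P\,f)$. Your closing remark that the fixed-point-freeness is a negative statement (so the argument by contradiction is intuitionistically sound) and that the sethood of $\W$ is what licenses the product is a welcome, if implicit in the paper, clarification.
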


\begin{proof}
  Clearly, the first statement implies the second one, which implies the third. To
  close the circle, suppose $\W$ is a set of chain-complete posets as in
  the third statement. Then the chain-complete poset $\prod_{P \in \W} P^{\Prog{P}}$
  carries a progressive endomap with no fixed point, sending $F$ to $(P, f) \mapsto f (F P
  f)$.
\end{proof}

We remark that the key ingredient in most proofs from this subsection was that any
product of chain-complete partial orders is again chain-complete.
Lemma~\ref{lemma:chain-completeness-drops} below may be seen as a strong generalisation of
this fact.


\subsection{A set of all trichotomous ordinals?}
\label{subsec:ordinals}

In the (futile) search for an intuitionistic proof of the Bourbaki-Witt
theorem it seems natural to consider the transfinite iteration of a
progressive map $f : P \to P$,
\begin{equation*}
  x \leq f(x) \leq f^2(x) \leq \cdots \leq f^\omega(x) \leq
  f^{\omega+1}(x) \leq \cdots
\end{equation*}
One feels that a fixed point will be reached, if only we can produce a sufficiently long
order to iterate along. In classical set theory this is possible, even without the axiom
of choice. For example, Lang~\cite{Lang:algebra} proves the Bourbaki-Witt theorem by
considering the least subset $C \subseteq P$ which contains~$x$, is closed under~$f$ and
under suprema of chains. He proves, classically but without choice, that $C$ is a chain,
from which it quickly follows that the supremum $\tbigwedge C$ is a fixed point of~$f$. In
fact, $C$ is (isomorphic to) an ordinal and is precisely large enough for the iteration
of~$f$ to stabilise after $C$-many steps.

Can fixed points always be found by transfinite iteration, as long as they exist? Is
failure of Bourbaki-Witt always due to a lack of existence of long enough ordinals? In
Subsection~\ref{subsec:not-enough-ordinals} below, we answer the first question
negatively: there is a topos in which the Bourbaki-Witt principle holds, but fixed points
cannot generally be reached by iteration along ordinals. In this section, we show that the
answer to the second question is positive: the Bourbaki-Witt principle fails precisely
when there is a set of all ordinals.

In the intuitionistic world the matter is complicated by the fact that the intuitionistic
theory of ordinals is not nearly so well behaved as the classical;
see~\cite{Taylor:intset} for an analysis of what can be done. Thus, before proceeding, we
need to pick a definition of ordinals.

Recall that a relation $<$ on $L$ is \emph{inductive} if it satisfies the induction
principle
\begin{equation*}
  (\xall{x}{L}{(\xuall{y < x}{\phi(y)}) \lthen \phi(x)})
  \implies
  \xall{x}{L}{\phi(x)},
\end{equation*}
for all predicates~$\phi$ on~$L$. In addition to the induction principle for predicates,
an inductive relation admits inductive definitions of maps. However, in our case,
attempting to iterate a progressive map, there is a complication. Given a progressive map
$f : P \to P$ on a chain-complete poset~$P$, we would like to define $\tilde{f} : L \to P$
inductively by
\begin{equation*}
  \tilde{f}(y) = \tbigwedge_{x < y} f(\tilde{f}(x)).
\end{equation*}
For this to be a valid definition we need to know that these suprema exist, so we must
ensure inductively that each $\set{f(\tilde{f}(x)) \such x < y}$ is a chain in~$P$. A
fairly strong notion of ordinals is needed:

\begin{definition}
  A \emph{trichotomous ordinal} $(L, {<})$, is a transitive inductive relation satisfying
  the law of trichotomy: for all $x, y \in L$, either $x < y$, $x = y$, or $y < x$.
\end{definition}

\noindent
One can now show:

\begin{lemma}
  If $L$ is a trichotomous ordinal, and $f$ is a progressive map on a chain-complete poset
  $P$, then we may define the iteration $\tilde{f} : L \to P$ of $f$ along $L$ as described above, by the
  equation $\tilde{f}(y) = \tbigwedge_{x < y} f(\tilde{f}(x))$.
\end{lemma}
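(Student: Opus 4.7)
The plan is to define $\tilde{f}$ by recursion along the inductive relation ${<}$ on $L$, and to carry along a strengthened invariant strong enough to guarantee that the relevant supremum is always taken over a chain. Specifically, I would prove by induction on $y \in L$ the combined statement that $\tilde{f}(y)$ is defined and equal to $\tbigwedge_{x < y} f(\tilde{f}(x))$, and that for all $x_1 < x_2 \leq y$, $f(\tilde{f}(x_1)) \leq f(\tilde{f}(x_2))$.

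For the inductive step, assume the statement for all $y' < y$. The crux is to verify that $\set{f(\tilde{f}(x)) \such x < y}$ is a chain in $P$. Given $x_1, x_2 < y$, apply trichotomy on $L$; the case $x_1 = x_2$ is trivial and the case $x_2 < x_1$ is symmetric, so suppose $x_1 < x_2$. By the inductive hypothesis at $x_2$ we have $\tilde{f}(x_2) = \tbigwedge_{w < x_2} f(\tilde{f}(w))$, so $f(\tilde{f}(x_1))$ is one of the terms of this supremum and hence $f(\tilde{f}(x_1)) \leq \tilde{f}(x_2)$. Progressivity of $f$ gives $\tilde{f}(x_2) \leq f(\tilde{f}(x_2))$, and transitivity concludes $f(\tilde{f}(x_1)) \leq f(\tilde{f}(x_2))$. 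This simultaneously establishes the chain condition, making the supremum defining $\tilde{f}(y)$ exist by chain-completeness of $P$, and extends the monotonicity clause up to $y$.

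The main obstacle is foundational rather than mathematical: the recursion does not fit directly into the standard template for definition by induction on an inductive relation, since the legitimacy of the defining equation at $y$ depends on an invariant that has to be established in parallel. The natural remedy is to construct $\tilde{f}$ via its graph as a subrelation of $L \times P$, and prove by induction on $y \in L$ that this graph relates $y$ to a unique element of $P$, with the monotonicity statement above serving as the strengthened invariant that makes each local recursive step well posed. Trichotomy is used only once, in the comparison step; the remaining ingredients are transitivity of ${<}$, the induction principle for inductive relations, progressivity of $f$, and chain-completeness of $P$.
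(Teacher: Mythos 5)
Your proposal is correct and follows essentially the same route as the paper: a simultaneous induction establishing definedness of $\tilde{f}$ together with a monotonicity invariant (via trichotomy and progressivity) that guarantees the set over which each supremum is taken is a chain. Your version is in fact slightly more careful than the paper's one-line proof, since the invariant you carry — monotonicity of $x \mapsto f(\tilde{f}(x))$ rather than of $\tilde{f}$ alone — is exactly what is needed given that $f$ is only progressive, not monotone.
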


\begin{proof}
  By induction on $y$, $\tilde{f}$ is monotone whenever it is defined; so
  $\set{\tilde{f}(x) \such x < y}$ is always a chain in~$P$, and thus $\tilde{f}$ is
  totally defined on $L$.
\end{proof}

A few more observations about trichotomous ordinals, similarly straightforward by
induction, will also be useful:
\begin{enumerate}
\item A inductive relation is asymmetric---that is, $(x < y) \lthen \lnot (y < x)$ for
  all $x,y$---and irreflexive.

\item Trichotomous ordinals are rigid: the only automorphism $L \to L$ is the identity.

\item The class of trichotomous ordinals forms a pre-order under the ``embeds as an
  initial segment'' relation, and is moreover chain-complete.

\item If $L$ is a trichotomous ordinal, then so is the strict order $L+1$ formed by
  adjoining a new top element above~$L$. This ordinal is called the \emph{successor} of
  $L$; the successor map on the class of trichotomous ordinals is progressive and has no
  fixed point. Note that unlike classically, the successor map may \emph{not} be monotone
  \cite{Taylor:intset}.
\end{enumerate}

We are now equipped to compare ordinal existence and Bourbaki-Witt as promised.

\begin{theorem}
  \label{theorem:ord-is-set}
  The following are (intuitionistically) equivalent:
  \begin{enumerate}
  \item \label{equivs:bw-counterexample} There is a progressive map on a chain-complete poset which has no fixed point.
  \item \label{equivs:ordinals-embed} There is a set into which every trichotomous ordinal injects.
  \item \label{equivs:ordwk-exists} There is a set $\Onwk$ of trichotomous ordinals such that every trichotomous ordinal is isomorphic to some ordinal in $\Onwk$.
  \item \label{equivs:ord-exists} There is a set $\On$ of trichotomous ordinals such that every trichotomous ordinal is isomorphic to a unique ordinal in $\On$.  (In topos-theoretic terms, $\On$ is a classifying object for trichotomous ordinals.) 
  \end{enumerate}
\end{theorem}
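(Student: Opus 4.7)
The plan is a cyclic proof $(1) \Rightarrow (2) \Rightarrow (3) \Rightarrow (4) \Rightarrow (1)$. The step $(2) \Rightarrow (3)$ is a routine separation; the two substantive end-of-cycle directions are $(1) \Rightarrow (2)$, which produces from a Bourbaki-Witt counterexample enough room to embed every trichotomous ordinal (via the iteration lemma just above), and $(4) \Rightarrow (1)$, which turns a set of trichotomous ordinals into a Bourbaki-Witt counterexample via the successor. In the middle, $(3) \Rightarrow (4)$ is a quotient by isomorphism, handled carefully using rigidity.

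For $(1) \Rightarrow (2)$, suppose $f : P \to P$ is progressive on a chain-complete poset $P$ and has no fixed point. For any trichotomous ordinal $L$, the preceding lemma produces the iterate $\tilde f : L \to P$. I claim $\tilde f$ is injective: whenever $x <_L y$, progressiveness and the defining equation give $\tilde f(x) \leq f(\tilde f(x)) \leq \tilde f(y)$, and an equality $\tilde f(x) = \tilde f(y)$ would force $f(\tilde f(x)) = \tilde f(x)$, contradicting the absence of a fixed point. Hence $P$ itself is a set into which every trichotomous ordinal embeds. For $(2) \Rightarrow (3)$, separate from $\pow{X} \times \pow{X \times X}$ the pairs $(A, {<_A})$ that form a trichotomous ordinal on a subset of $X$; every trichotomous ordinal is isomorphic to such a pair by transporting its order along the injection provided by $(2)$.

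For $(3) \Rightarrow (4)$, take $\On = \Onwk / {\cong}$. This is a set, and by rigidity of trichotomous ordinals---any two isomorphic ones being uniquely isomorphic---each equivalence class inherits a canonical trichotomous-ordinal structure pulled back along those unique isomorphisms. Read as the parenthetical in $(4)$ suggests, $\On$ indexes a classifying family of trichotomous ordinals, and every trichotomous ordinal is isomorphic to a unique member. For $(4) \Rightarrow (1)$, equip $\On$ with the initial-segment embedding relation, which is antisymmetric on iso classes by rigidity and hence a partial order. It is chain-complete: the directed colimit of a chain of ordinals along their initial-segment embeddings is again a trichotomous ordinal, representing the supremum in $\On$. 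The successor $L \mapsto L + 1$ is progressive; a quick induction shows any initial-segment self-embedding of a trichotomous ordinal is the identity, and since $L$ embeds into $L+1$ as a proper initial segment, $L \not\cong L+1$, so the successor has no fixed point---the desired Bourbaki-Witt counterexample.

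I expect $(3) \Rightarrow (4)$ to be the main obstacle. Classically one picks a representative from each isomorphism class using choice, but constructively the correct move is to take $\On$ to be the quotient itself and to interpret ``set of trichotomous ordinals'' as a family indexed by $\On$. Rigidity is exactly what makes this reinterpretation legitimate. A secondary care is needed in $(4) \Rightarrow (1)$, where one must verify that progressiveness, chain-completeness, and absence of fixed points all descend correctly to iso classes; the last of these rests on the observation that no trichotomous ordinal is isomorphic to its successor.
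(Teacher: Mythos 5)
Your proposal is correct and follows essentially the same route as the paper: the same cycle $(1)\Rightarrow(2)\Rightarrow(3)\Rightarrow(4)\Rightarrow(1)$, with injectivity of the ordinal iteration, separation inside $\pow{A}\times\pow{A\times A}$, rigidity in place of choice, and the successor on the initial-segment order. The only difference is cosmetic: where you say each isomorphism class ``inherits a canonical structure,'' the paper makes the canonical representative concrete as the quotient of the coproduct $\coprod_{L\in C}L$ by the unique isomorphisms, which supplies the actual carrier set your phrasing leaves implicit.
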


\begin{proof}
  We prove four implications: $(\ref{equivs:bw-counterexample}) \lthen
  (\ref{equivs:ordinals-embed}) \lthen (\ref{equivs:ordwk-exists}) \lthen
  (\ref{equivs:ord-exists}) \lthen (\ref{equivs:bw-counterexample})$.

  First, suppose $P$ is chain-complete and $f : P \to P$ is a progressive map without
  fixed points. For any trichotomous ordinal $L$, we can define the iteration $\tilde{f}$
  of~$f$ along $L$ as described above.
  But now, the map $\tilde{f}$ is injective: if $\tilde{f}(x) = \tilde{f}(y)$, then $x <
  y$ cannot hold because that would give us a fixed point of~$f$:
  \begin{equation*}
    f(\tilde{f}(x)) \leq \tbigwedge_{x < y} f(\tilde{f}(x)) =
    \tilde{f}(y) = \tilde{f}(x) \leq f(\tilde{f}(x)).
  \end{equation*}
  The case $y < x$ is similarly impossible, so $x = y$. Thus every trichotomous ordinal
  embeds in $P$, as required.
  
  Second, if every ordinal injects into a set $A$, then take
  \begin{equation*}
    \Onwk = \set{ (L, {<}) \in \pow{A} \times \pow{A \times A} \such
      \text{$(L,{<})$ is a trichotomous ordinal}}.
  \end{equation*}

  In the third implication we avoid the axiom of choice by using an idea familiar from the
  construction of moduli spaces in geometry: if we can weakly classify a class of objects
  and they have no non-trivial automorphisms, then we can classify them.
  Take the quotient set $\Onwk/{\iso}$ of equivalence classes of ordinals up to
  isomorphism. Now for any equivalence class $C \in \Onwk/{\iso}$, we can define a
  canonical representative as follows. Take the coproduct $S_{C} = \coprod_{L \in C} L$,
  and for $L, L' \in C$, $x \in L$, $y \in L'$, set $x \sim y$ if the unique isomorphism
  $L \iso L'$ sends $x$ to $y$. Then $R_{C} = S_{C}/{\sim}$ has a natural bijection to
  each $L \in C$, commuting with the isomorphisms between these; so with the ordering
  transferred along any of these bijections, $R_C$ is a trichotomous ordinal, and a
  representative for $C$. Thus $\On = \set{R_C \such C \in \Onwk/{\iso}}$ is as desired.

  The last implication is easy because the set $\On$ of trichotomous ordinals, if it
  exists, is a chain-complete poset under the initial-segment preorder; and the successor
  map on~$\On$ is progressive and has no fixed points.
\end{proof}


\section{Topos models} \label{sec:models}



\subsection{Bourbaki-Witt fails in realisability toposes}
\label{subsec:realisability}

The Bourbaki-Witt principle fails in the effective topos $\Eff$, as was shown by the
first author~\cite{Bauer:On_the_Failure}. We indicate how the proof can be
adapted easily to work in any realisability topos. For background on
realisability see~\cite{oosten08:_realiz}.

Let $A$ be a partial combinatory algebra and $\RT{A}$ the realisability topos over it. The
category of sets~$\Set$ is equivalent to the category of sheaves in $\RT{A}$ for the
$\lnot\lnot$-coverage. The inverse image part of the inclusion $\RT{A} \to \Set$ is the
global points functor $\Gamma : \RT{A} \to \Set$, and we denote the direct image by
$\nabla : \Set \to \RT{A}$.

Let $\kappa$ be the cardinality of $A$, where we work classically in $\Set$. The successor
$\kappa^{+}$ is a regular cardinal, which we view as an ordinal. The successor map $s :
\kappa^{+} \to \kappa^{+}$ is progressive and monotone but has no fixed points. This is no
suprise as $\kappa^{+}$ is not chain-complete, although it has suprema of chains whose
cardinality does not exceed $\kappa$. But the poset $\nabla \kappa^{+}$ \emph{is}
chain-complete in $\RT{A}$ because every chain in $\RT{A}$ has at most $\kappa$ elements
(to see what exactly this means in the internal language of $\RT{A}$
consult~\cite{Bauer:On_the_Failure}), therefore the successor map $\nabla s : \nabla
\kappa^{+} \to \nabla \kappa^{+}$ provides a counterexample to both the Bourbaki-Witt and
the Knaster-Tarski principle.

In the effective topos $\Eff$ the object $\On'$ from
Theorem~\ref{theorem:ord-is-set} has a familiar description. It is none other
than Kleene's universal system of notations $O$ for recursive ordinals,
see~\cite[11.7]{RogersH:recfun}.


\subsection{Bourbaki-Witt does not hold in the free topos}

Recall \cite{lambek-and-scott} that there is an elementary topos $\Efree$, ``the free topos'', constructed from the syntax of intuitionistic higher-order logic (IHOL), and pseudo-initial in the category of elementary toposes and logical morphisms. Objects in $\Efree$ are thus exactly such objects as are definable in IHOL, and have exactly such properties as are provable.

Does Bourbaki-Witt hold in the free topos? It cannot \emph{fail}, since the canonical
logical morphism $\Efree \to \Set$ would preserve any failure. But it might not hold either:
there could be some poset defined in IHOL, provably chain-complete, with a definable and
provably progressive map, for which the existence of a fixed point is not provable. To
show this unprovability for some particular $P$ and $f$, it suffices to give a topos $\E$
in which the interpretation of $f$ has no fixed point. Happily, with just a little work,
the poset $\nabla \omega_1$ in $\Eff$ (an instance of the construction of Subsection~\ref{subsec:realisability}), and its successor map, can
be exhibited as such an interpretation.

\begin{theorem}
  The Bourbaki-Witt principle does not hold in $\Efree$.
\end{theorem}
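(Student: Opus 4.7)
The plan is to exploit pseudo-initiality of $\Efree$: there is a logical morphism $F : \Efree \to \Eff$, and logical morphisms preserve all IHOL-definable topos-theoretic constructions together with the validity of all IHOL-provable statements. Consequently, to show that Bourbaki-Witt is unprovable in $\Efree$ it suffices to exhibit, in pure IHOL, a chain-complete poset $P$ together with a progressive endomap $s : P \to P$ such that $F(s)$ has no fixed point in $\Eff$.

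For the candidate $(P, s)$ I would take $P$ to be the set of trichotomous well-orders coded on subsets of $\NN$ --- a subobject of $\pow{\NN} \times \pow{\NN \times \NN}$ cut out by a bounded formula, quotiented by order-isomorphism and ordered by the initial-segment relation --- with $s$ the successor operation adjoining a new top element. Progressiveness is immediate. Chain-completeness, however, is delicate, and this is where the ``little work'' of the hint lives: in $\Eff$ every chain in $F(P)$ is essentially countable (as in Subsection~\ref{subsec:realisability}), so colimits of chains remain codable on $\NN$ there, but in IHOL no such size bound is available. To make chain-completeness hold IHOL-provably, I would either allow carriers in a richer IHOL-definable ``universe of small sets'' whose $\Eff$-interpretation is still subcountable, or pass to the chain-completion of $P$ inside IHOL. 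Either adjustment preserves the property that the canonically extended successor still has no fixed point whenever the original does not.

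The last step is to identify $F(P)$ with $\nabla\omega_1$ --- or at least to exhibit $\nabla\omega_1$ with its successor as an invariant subposet of $F(P)$ on which $F(s)$ restricts to the map of Subsection~\ref{subsec:realisability} --- using that in $\Eff$ every trichotomous ordinal is coded on a subquotient of $\NN$. Subsection~\ref{subsec:realisability} then hands us that $F(s)$ has no fixed point in $\Eff$, so $s$ can have no IHOL-provable fixed point in $\Efree$ either. The principal obstacle in the plan is precisely this reconciliation: making the IHOL description just permissive enough to be provably chain-complete, without losing the tight coupling to the counterexample on the realisability side.
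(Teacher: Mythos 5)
Your overall strategy (pseudo-initiality of $\Efree$, plus an IHOL-definable counterexample whose interpretation in $\Eff$ has no fixed point) is exactly the paper's, and you have correctly located the crux: the naive object of countably-coded ordinals is not provably chain-complete in IHOL, since under the logical morphism to $\Set$ it becomes $\omega_1$, which is not chain-complete. The gap is that neither of your two proposed repairs closes this. A ``richer universe of small sets'' cannot help: whatever IHOL-definable collection of carriers you choose, its interpretation in $\Set$ is some set of ordinals and the same cofinality obstruction recurs one level up, so you never obtain provable chain-completeness while keeping the successor fixed-point-free on the $\Eff$ side. The free chain-completion fares worse: in its natural implementation (downsets generated under unions of chains, with the successor extended by $D \mapsto D \cup \set{s(x) \such x \in D}$) the extended map acquires fixed points --- e.g.\ the $s$-closed downset $\omega$ --- so your claim that the completion ``preserves the property that the canonically extended successor still has no fixed point'' is false as stated, and no corrected variant is supplied.

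The paper's resolution is a specific device you are missing. First, the definable poset is $\omega_1^{\lnot\lnot}$, built using $\lnot\lnot$-sheafification of the quotient of the subobject of ${\Omega_{\lnot\lnot}}^{\N\times\N}$ of well-orderings; the sheafification is what makes the $\Eff$-interpretation equal to $\nabla\omega_1$ rather than some internal object of coded ordinals (your unsheafified version would not literally be $\nabla\omega_1$, and identifying it with, or restricting it to, $\nabla\omega_1$ is left unjustified). Second and crucially, one sets $t = \set{* \in 1 \such \omega_1^{\lnot\lnot}\ \text{is chain-complete}}$ and $P = (\omega_1^{\lnot\lnot})^t$. This $P$ is \emph{provably} chain-complete, being a dependent product of posets each of which is chain-complete by the hypothesis carried by its index, while in $\Eff$ the truth-value $t$ is $1$, so $P \iso \nabla\omega_1$ and $s^t$ is the successor, which has no fixed point there. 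Without this exponentiation-by-a-truth-value step (an intuitionistic rendering of ``if $P$ is chain-complete then $P$, else $1$''), your argument does not go through.
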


\begin{proof}
  As we saw above, $\nabla$ embeds $\Set$ as sheaves for the $\lnot \lnot$ topology on
  $\Eff$. In $\Set$, $\omega_1$ is definable as a subquotient of $2^\N$: the set of all
  subsets of $\N \times \N$ describing well-orderings of $\N$, modulo isomorphism of the
  resulting well-orders. Thus, interpreting this definition in the Kripke-Joyal semantics
  for $\lnot \lnot$ in $\Eff$, $\nabla(\omega_1)$ is definable as the $\lnot
  \lnot$-sheafification of a certain quotient of a certain subobject of ${\Omega_{\lnot
      \lnot}}^{\N \times \N}$; similarly, its order and the successor map are definable,
  so we have a poset $\omega_1^{\lnot \lnot}$ in $\Efree$, together with a progressive
  endomap $s$, which are interpreted as $\nabla(\omega_1)$ and its successor map in
  $\Eff$.

  Unfortunately, $\omega_1^{\lnot \lnot}$ cannot be chain-complete in $\Efree$, since in
  $\Set$ it is interpreted as $\omega_1$. We can remedy this, however, using an
  exponenential by a truth-value. Let $t$ denote the set $\{ * \in 1 \such \omega_1^{\lnot
    \lnot}\ \textrm{is chain-complete}\}$, and set
  $$P := (\omega_1^{\lnot \lnot})^t = \prod_{u \in t} \omega_1^{\lnot \lnot}.$$
  This now has a natural chain-complete ordering, since the second description exhibits it
  as a dependent product of chain-complete posets: $\omega_1^{\lnot \lnot}$ is not in
  general chain-complete, but given any $u \in t$, it certainly is! Similarly, the endomap
  $f = s^t$ is progressive. But in $\Eff$, the truth-value in question is $1$, so $P$ is
  interpreted as $\nabla(\omega_1)^1 \iso \nabla(\omega_1)$, and $f$ as successor. Thus
  the existence of a fixed point of $f$ is not provable, so we have a non-example of
  Bourbaki-Witt in $\Efree$.
\end{proof}

Taking exponentials by truth-values in this fashion may be seen as an intuitionistic
implementation of the classical construction ``if $P$ is chain-complete then $P$, else
$1$''.


\subsection{Bourbaki-Witt holds in cocomplete toposes}

We have seen that the Bourbaki-Witt and Tarski conditions are not in general
constructively valid. However, they hold in an important class of models thanks to the
following transfer principle.

\begin{theorem}
  \label{theorem:lift-along-gm}%
  If $\E \to \F$ is a geometric morphism and $\F$ satisfies the Bourbaki-Witt
  principle, then so does $\E$.
\end{theorem}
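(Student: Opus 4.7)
The strategy is to transport the situation from $\E$ to $\F$ through the direct image functor $f_*$, apply the Bourbaki-Witt principle in $\F$, and return to $\E$ via the inverse image $f^*$. Let $P$ be a chain-complete poset in $\E$ with a progressive endomorphism $g : P \to P$. Applying $f_*$ yields the object $f_* P$ in $\F$, equipped with the order $f_*{\leq_P}$ and the endomorphism $f_* g$. Because $f_*$ preserves finite limits, $f_* P$ is a poset in $\F$, and $f_* g$ is progressive in $\F$: progressivity is given by a factorisation of $(\id{P}, g) : P \to P \times P$ through the subobject ${\leq_P} \hookrightarrow P \times P$, and such factorisations are preserved by $f_*$.

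The principal obstacle is to show that $f_* P$ remains chain-complete in $\F$. This does not follow formally from preservation of finite limits, since chain-completeness involves suprema indexed by the higher-type object $\Ch{f_* P}$, and $f_*$ does not in general preserve power objects. The intuitive picture is that, given a chain $C$ in $f_* P$ internally in $\F$, one transposes across the adjunction, extracts a genuine chain in $P$ (for instance, by taking the image of $f^* C \to P$ through the counit), applies the supremum operator of $P$ to obtain an element, and pushes this through $f_*$ to obtain a candidate supremum in $f_* P$; the care required is in making the construction uniform enough to produce an actual supremum morphism $\Ch{f_* P} \to f_* P$ in $\F$. I would expect this to be precisely the content of Lemma~\ref{lemma:chain-completeness-drops}, flagged at the end of Subsection~\ref{subsec:subtleties} as a strong generalisation of the preservation of chain-completeness under products.

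Granted chain-completeness of $f_* P$, Bourbaki-Witt in $\F$ applied to $(f_* P, f_* g)$ says that the fixed-point subobject $F := \set{y \in f_* P \such f_* g(y) = y}$ has $F \to 1_\F$ epi in $\F$. Since $f_*$ preserves equalizers and the terminal, $F \iso f_* F'$ where $F' := \set{x \in P \such g(x) = x}$ in $\E$. Applying the inverse image $f^*$, which preserves epimorphisms (as a left adjoint) and the terminal (being left exact), yields an epimorphism $f^* f_* F' \twoheadrightarrow 1_\E$ in $\E$. By naturality of the counit this map coincides with the composite $f^* f_* F' \xrightarrow{\epsilon_{F'}} F' \to 1_\E$, so the second factor $F' \to 1_\E$ must already be epi, which is exactly the internal statement that $g$ has a fixed point, establishing Bourbaki-Witt in $\E$.
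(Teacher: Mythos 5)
Your proof is correct and takes essentially the same route as the paper's: push $(P,g)$ forward along the direct image, invoke Lemma~\ref{lemma:chain-completeness-drops} for chain-completeness (whose content you correctly anticipated), apply Bourbaki-Witt in $\F$, and bring the fixed point back via the inverse image and the counit. The only cosmetic difference is that the paper transports the geometric statement ``$\Gamma(f)$ has a fixed point'' along the inverse image and uses naturality of $\epsilon_P$, whereas you identify the fixed-point subobject with the direct image of the one in $\E$ and apply $f^*$ to the epi onto $1$; these are the same argument.
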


In particular, any cocomplete topos $\E$ has a geometric morphism $(\Gamma, \Delta) : \E \to \Set$, where
$\Gamma(A) = \E(1,A)$ is the global-points functor and $\Delta(X) = \coprod_{X} 1$ takes a
set $X$ to the $X$-fold coproduct of~$1$'s. By applying the theorem to this case, we see
that the Bourbaki-Witt principle holds in cocomplete toposes:

\begin{corollary}
  \label{corollary:bw-in-grothendieck}%
  Any cocomplete topos, in particular any sheaf topos, satisfies the Bourbaki-Witt
  principle.
\end{corollary}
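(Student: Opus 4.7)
The plan is to invoke Theorem~\ref{theorem:lift-along-gm} with $\F = \Set$. Two things must be established: that any cocomplete topos $\E$ admits a geometric morphism to $\Set$, and that $\Set$ itself satisfies Bourbaki-Witt.

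For the geometric morphism, I take $\Gamma(A) = \E(1, A)$ and $\Delta(X) = \coprod_X 1$, as sketched in the paragraph preceding the corollary. The adjunction $\Delta \dashv \Gamma$ is immediate from the universal property of the coproduct:
\begin{equation*}
  \E(\Delta X, A) \iso \prod_{x \in X} \E(1, A) = \Set(X, \Gamma A).
\end{equation*}
The remaining requirement---that $\Delta$ preserves finite limits---follows from standard facts about toposes: $\Delta(1) = 1$ since $\coprod_{\{*\}} 1 = 1$; $\Delta$ preserves binary products by distributivity of $\times$ over coproducts in a cartesian closed category, yielding $(\coprod_X 1) \times (\coprod_Y 1) \iso \coprod_{X \times Y} 1$; and $\Delta$ preserves equalisers (equivalently, pullbacks) because coproducts in a topos are disjoint and universal.

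For Bourbaki-Witt in $\Set$, this is the classical theorem of Bourbaki and Witt recalled in the introduction; since we are willing to reason classically about $\Set$, there is no issue here. Applying Theorem~\ref{theorem:lift-along-gm} to $(\Gamma, \Delta) : \E \to \Set$ then gives the conclusion, and the ``in particular'' clause follows because any Grothendieck topos is cocomplete.

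The main (minor) obstacle is the left-exactness check for $\Delta$; otherwise the corollary drops out as a one-line consequence of Theorem~\ref{theorem:lift-along-gm}.
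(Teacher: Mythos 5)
Your proposal is correct and matches the paper's argument exactly: the corollary is obtained by applying Theorem~\ref{theorem:lift-along-gm} to the geometric morphism $(\Gamma,\Delta):\E\to\Set$ with $\Gamma(A)=\E(1,A)$ and $\Delta(X)=\coprod_X 1$, together with the classical Bourbaki-Witt theorem in $\Set$ (the one place classical logic is used). Your extra verification that $\Delta$ is left exact is a standard fact the paper leaves implicit, and your sketch of it is fine.
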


\noindent
Since this is our guiding example, we will write the geometric morphism as
$(\Gamma,\Delta)$ in general, for the comforting familiarity it provides.
To prove the theorem one requires a main lemma:

\begin{lemma}
  \label{lemma:chain-completeness-drops}%
  If $(\Delta,\Gamma) : \E \to \F$ is a geometric morphism and $P$ is chain-complete in
  $\E$, then $\Gamma(P)$ is chain-complete in $\F$.
\end{lemma}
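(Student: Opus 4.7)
The plan is to build a supremum operator $\sup_{\Gamma P} : \Ch{\Gamma P} \to \Gamma P$ in $\F$. The given operator $\sup_P : \Ch{P} \to P$ in $\E$ yields a morphism $\Gamma(\sup_P) : \Gamma(\Ch{P}) \to \Gamma P$ in $\F$ by functoriality, so it suffices to construct a ``transport'' morphism $\chi : \Ch{\Gamma P} \to \Gamma(\Ch{P})$ that converts a chain in $\Gamma P$ into an appropriate chain in $P$, and then to set $\sup_{\Gamma P} := \Gamma(\sup_P) \circ \chi$.

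The crucial step is the following claim: for any chain $C \monoto \Gamma P$ in $\F$, the image $D \monoto P$ of the transposed map $\Delta C \to \Delta \Gamma P \to P$ (composing with the counit) is a chain in $P$. The chain condition on $C$ says precisely that $C \times C \monoto \Gamma P \times \Gamma P$ factors through the union ${\leq_{\Gamma P}} \cup {\geq_{\Gamma P}}$ of subobjects. This is a \emph{geometric} statement, so it is preserved by $\Delta$, which, being the inverse image of a geometric morphism, preserves finite limits, covers, and finite unions of subobjects. Combined with the fact that the counit $\Delta \Gamma P \to P$ is a poset map (because $\Gamma$ preserves the order relation as a limit), we deduce that $\Delta C \times \Delta C \to P \times P$ factors through ${\leq_P} \cup {\geq_P}$. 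Since $\Delta C \times \Delta C \epito D \times D$ is a cover and covers are orthogonal to monos in the topos $\E$, this factorization descends to $D \times D$, showing $D$ is a chain in $P$.

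To turn this fibrewise observation into the morphism $\chi$, one carries out the same construction with parameters, applying it to the universal chain over $\Ch{\Gamma P}$: the resulting subobject of $\Delta(\Ch{\Gamma P}) \times P$ is fibrewise a chain in $P$, giving a morphism $\Delta(\Ch{\Gamma P}) \to \Ch{P}$ in $\E$, whose transpose under $\Delta \dashv \Gamma$ is the desired $\chi$. It then remains to verify, in the internal logic of $\F$, that $\sup_{\Gamma P}$ really produces suprema. For upper-boundedness, the unit $C \to \Gamma \Delta C \to \Gamma D$ exhibits $C$ as a subobject of $\Gamma D$, while $\Gamma(\sup_P D)$ is manifestly an upper bound of $\Gamma D$. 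For least-upper-boundedness, any upper bound of $C$ in $\Gamma P$ transposes to an upper bound of $\Delta C$, hence of its image $D$, in $P$; so $\sup_P D$ lies below it in $P$, and transposing back via $\Gamma$ yields the desired inequality in $\Gamma P$.

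The main obstacle will be the chain claim for $D$. The naive chain condition $\forall x, y \in C.\ x \leq y \lor y \leq x$ uses universal quantification, which $\Delta$ does \emph{not} preserve in general; recasting it as a geometric containment of subobjects on $C \times C$ is the essential move that makes the pushforward-image argument go through.
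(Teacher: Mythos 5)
Your proposal is correct and follows essentially the same route as the paper's proof: pull back the universal chain over $\Ch{\Gamma P}$ along $\Delta$, use the geometricity of the chain condition and the counit $\epsilon_P$ to obtain a parametrized chain in $P$, take its suprema, and transpose back along $\Delta \dashv \Gamma$, verifying the universal property by the same adjunction argument. Your factorization of the supremum map as $\Gamma(\sup_P) \circ \chi$ is just a repackaging of the paper's map $\check{s}$, so there is nothing substantive to add.
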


\begin{proof}
  We wish to construct a supremum map $\tbigvee_{\Gamma P} : \Ch{\Gamma P} \to \Gamma P$.
  Consider the universal chain in $\Gamma P$, i.e.\ the $\Ch{\Gamma P}$-indexed subset of
  $\Gamma P$
  $$C = \{ (x, c) \such c \in x \} \into \Ch{\Gamma P} \times \Gamma P.$$
  $\Delta C$ is now a $\Delta(\Ch{\Gamma P})$-indexed subset of $\Delta \Gamma P$, and
  indeed is a chain, since $\Delta$ preserves $\lor$; so its image $\widehat{C}$ under
  $\epsilon_P : \Delta \Gamma P \to P$ (the co-unit of the geometric morphism) is a
  $\Delta(\Ch{\Gamma P})$-indexed chain in $P$. Thus there is a map $s: \Delta(\Ch{\Gamma
    P}) \to P$ giving suprema for $\widehat{C}$, and hence for $\Delta(C)$.

  Its transpose $\check{s}: \Ch{\Gamma P} \to \Gamma P$ is our candidate for
  $\tbigvee_{\Gamma P}$. We just need to show that $\E$ validates ``for all $c : \Ch{\Gamma
    P}$ and $x:P$, $\check{s}(c) \leq x \liff \all{y}{c}{y \leq x}$'', or in other words,
  that for any $(c,x): A \to \Ch{\Gamma P} \times \Gamma P$, the map
  $$(\check{s} \circ c, x): A \to \Gamma P \times \Gamma P$$
  factors through $\Gamma (\leq)$ if and only if the map 
  $$m: C \times_{\Ch{\Gamma P}} A = \{ (y,a) \such y \in c(a) \} \to \Gamma P \times \Gamma P$$
  sending $(y,a)$ to $(y,x(a))$ factors through $\Gamma (\leq)$.

  But by the universal property of the adjunction, $(\check{s} \circ c, x)$ factors
  through $\Gamma (\leq)$ if and only if its transpose
  $$\widehat{(\check{s} \circ c, x)} = ((s \circ \Delta(c)), \hat{x}) : \Delta(A) \to P \times P$$
  factors through $\leq$. Since $s$ gives suprema for $\Delta(C)$, this in turn happens if
  and only if the map
  $$\hat{m} : \Delta(C) \times_{\Delta \Ch{\Gamma P}} \Delta(A) \to P \times P$$
  sending $(y,a)$ to $(y,\hat{x}(a))$ factors through $\leq$. But $\hat{m}$ is just the
  transpose of $m$, and so $\hat{m}$ factors through $\leq$ exactly if $m$ factors through
  $\Gamma(\leq)$. Thus $\check{s}$ gives suprema for chains in $\Gamma P$, as desired.
\end{proof}

\begin{proofof}{Theorem \ref{theorem:lift-along-gm}} Suppose now that $P$ is a chain-complete poset in $\E$, $f:P \to P$ is progressive, and $\F$ satisfies the Bourbaki-Witt principle.

$\Gamma(P)$ is chain-complete, by Lemma \ref{lemma:chain-completeness-drops}, and $\Gamma(f)$ is progressive, so $\F$ validates ``$\Gamma(f)$ has some fixed point in $\Gamma(P)$''.  Being a statement of geometric logic, this is preserved by $\Delta$, so $\E$ validates ``$\Delta (\Gamma(f))$ has some fixed point in $\Delta(\Gamma(P))$''.

But now $\epsilon_P \circ \Delta(\Gamma(f)) = f \circ \epsilon_P$ (by naturality), so if $x \in \Delta(\Gamma(P))$ is any fixed point of $\Delta(\Gamma(f))$, then $\epsilon_P(x) \in P$ is a fixed point of $f$.  So $\E$ validates ``$f$ has some fixed point in $P$'', as desired.
\end{proofof}

The only point in this section at which classical logic is required is for Corollary~\ref{corollary:bw-in-grothendieck}, to know that the Bourbaki-Witt theorem holds in $\Set$.


\subsection{Bourbaki-Witt does not imply ordinal existence} \label{subsec:not-enough-ordinals}

In Subsection \ref{subsec:ordinals} above, we asked: if Bourbaki-Witt holds, can any fixed point be computed by some long enough ordinal iteration?  Here, we present a counterexample: a topos in which Bourbaki-Witt holds, but there are not enough ordinals to compute fixed points.

The rough idea is as follows: we first consider ordinals and posets in the presheaf topos $\Sets^\arr$, where an ordinal turns out to be a pair of ordinary ordinals with a strictly monotone map between them, written as $[L_1 \to L_0]$.  Since in any ordinal, ${<}$ implies ${\neq}$, the length of the first component $L_1$ is bounded by the length of its second component $L_0$.  By contrast, looking at chain-complete posets $[P_1 \to P_0]$ with progressive maps, the length of iteration required to find fixed points can be made arbitrarily large by blowing up just $P_1$, while holding $P_0$ fixed.

So in any assignment $(P,f) \mapsto L$ providing ordinals to compute fixed points, $L_0$ must depend on $P_1$, not only on $P_0$.  But in any purely logical (i.e.~IHOL) construction, $L_0$ would depend only on $P_0$, by the construction of the logical structure in $\Sets^\arr$.
So although $\Sets^\arr$ has enough ordinals to compute fixed points, this fact cannot be realised by any purely logical construction.

Thus in $\EBWPf$, the free topos satisfying the Bourbaki-Witt principle and with a distinguished chain-complete poset and monotone map, there cannot be any ordinal computing the fixed point of $f$, since this would give a logical construction of such ordinals in any other topos, which we have seen is not possible.

We now formalise this argument, first setting up some terminology for the eventual goal.

\begin{definition}Say that a topos $\E$ satisfying the Bourbaki-Witt principle \emph{has enough ordinals} if for any chain-complete poset $P$ in $\E$ with a progressive map $f$, there is some object $B$, inhabited in the internal sense (i.e.~$B \to 1$ is epi), and some $B$-indexed family of ordinals $\la L_b \such b \in B \ra$, such that $\E$ validates ``for each $b \in B$, the iteration $f^{L_b}$ of $f$ along $L_b$ has as its supremum a fixed point of $f$''.  (We say that the ordinals $L_b$ \emph{compute fixed points} for $f$.)
\end{definition}

\begin{definition}Let $\LL_\BW[P,f]$ be the theory in IHOL given by adding to pure type theory an axiom schema asserting that the Bourbaki-Witt principle holds, together with a new type $P_\univ$, constants ${\leq}$ and $f_\univ$, and axioms asserting that $(P_\univ,\leq)$ is a chain-complete poset and $f_\univ$ a progressive map thereon.  Let $\EBWPf$ be the syntactic topos of this type theory \cite[II.11--16]{lambek-and-scott}.
\end{definition}

The universal property of $\EBWPf$ tells us that given any topos $\F$ satisfying Bourbaki-Witt and a progressive endomap $f$ on a chain-complete poset $P$ therein, there is a logical functor $\EBWPf \to \F$, unique up to canonical natural isomorphism, sending $P_\univ$ and $f_\univ$ to $P$ and $f$ respectively.\footnote{Contrary to what one might at first expect, these will not be the only logical functors out of $\EBWPf$; the axiom schema only forces Bourbaki-Witt to hold for posets in the image of the functor, not in the whole target topos.}

The goal of this section is now:

\begin{theorem} \label{thm:not-enough-ordinals}
  The topos $\EBWPf$ does not have enough ordinals.
  In particular, there is no inhabited family of ordinals $L \to B \epito 1$ that computes fixed points for $f_\univ$.
\end{theorem}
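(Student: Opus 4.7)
The plan is to argue by contradiction: assuming an inhabited family $L \to B \epito \one$ of ordinals in $\EBWPf$ computing fixed points for $f_\univ$, I would transport the data to the arrow topos $\Sets^\arr$---whose objects I write $[X_1 \to X_0]$ following the paper's convention---to produce absurdity. Since $\Sets^\arr$ is cocomplete, it satisfies Bourbaki-Witt by Corollary~\ref{corollary:bw-in-grothendieck}, so for any chain-complete $[P_1 \to P_0]$ with progressive $[f_1 \to f_0]$ the universal property of $\EBWPf$ yields a logical functor $\Phi \colon \EBWPf \to \Sets^\arr$, unique up to canonical isomorphism, sending $(P_\univ, f_\univ)$ to this pair; applying $\Phi$ to the assumed data gives a family $\Phi(L) \to \Phi(B) \epito \one$ computing fixed points for $[f_1 \to f_0]$ in $\Sets^\arr$.

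I would then establish two structural facts. First, a trichotomous ordinal in $\Sets^\arr$ has the form $[L_1 \to L_0]$ with $L_0, L_1$ ordinals in $\Sets$ and connecting map strictly order-preserving and injective---this follows by combining compatibility of the strict order with the structural map, together with irreflexivity and trichotomy at each level---so in particular $|L_1| \leq |L_0|$. Second, the evaluation $\ev_0 \colon \Sets^\arr \to \Sets$, $[X_1 \to X_0] \mapsto X_0$, is a logical functor; this reflects the equivalence $\Sets^\arr / y(0) \simeq \Sets$ (whence $\ev_0$ is pullback along $y(0) \to \one$), and a direct check gives $\ev_0 \Omega = \two$.

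The key move is to combine these. The composite $\ev_0 \circ \Phi \colon \EBWPf \to \Sets$ is a logical functor sending $(P_\univ, f_\univ)$ to $(P_0, f_0)$, so by the uniqueness in the universal property of $\EBWPf$ it is determined up to canonical isomorphism by $(P_0, f_0)$ alone. Hence $(\Phi L)_0$ and $(\Phi B)_0$---and in particular the cardinality of every level-$0$ fibre of the family---depend only on $(P_0, f_0)$, not on $(P_1, f_1)$. Now fix $(P_0, f_0) = (\one, \id{\one})$ so that $(\Phi L)_0$ has some fixed cardinality $\lambda$, pick a cardinal $\mu > \lambda$, and let $P_1 := \mu + 1$ with $f_1$ the successor on $\mu$ extended by $f_1(\mu) := \mu$. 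Then $[P_1 \to \one]$ is chain-complete with progressive $[f_1 \to \id{\one}]$, and the unique fixed point $\mu$ of $f_1$ can only be reached by iteration along an ordinal of length at least $\mu$. Since $\Phi(B) \epito \one$, the stalk $(\Phi B)_1$ is nonempty; pick any $b_1 \in (\Phi B)_1$ lying over $b_0 \in (\Phi B)_0$. The level-$1$ ordinal fibre at $b_1$ injects into the level-$0$ fibre at $b_0$, so has cardinality at most $\lambda < \mu$; yet the family's defining property, evaluated at stage $1$, asserts that the supremum of $\tilde{f}_1$ along this small ordinal is a fixed point of $f_1$---impossible.

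The main obstacle will be the Kripke-Joyal bookkeeping: one must check that ``$L_b$ computes fixed points for $f$'', evaluated at stage $1$ in $\Sets^\arr$ at a generalised element $b_1 \in (\Phi B)_1$, really does assert that $\sup \tilde{f}_1$ of the level-$1$ ordinal is fixed by $f_1$ on $P_1$. Once the two-level structure is written out, this is routine; the rest of the proof then rests on nothing more than the universal property of $\EBWPf$ and the logicality of $\ev_0$.
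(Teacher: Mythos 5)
Your proposal is correct and follows essentially the same route as the paper: the same blow-up posets $[\mu+1 \to 1]$ with successor, the same use of the logicality of $\ev_0$ together with the uniqueness clause of the universal property of $\EBWPf$ to force the level-$0$ part of the family to depend only on $(P_0,f_0)$, and the same conclusion that the resulting ordinals are too short. The only (inessential) divergence is in the final counting step, where you bound the level-$1$ fibres by cardinality via injectivity of the structure map $L_1 \to L_0$ and choose $\mu$ to be a cardinal, whereas the paper runs an explicit rank-function induction showing $\tilde{f}_1(x) \leq \rk(x)$; your version still needs that same induction (to see that iteration along an ordinal of length $<\mu$ cannot reach $\mu$), so be sure to include it when writing the argument out.
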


As indicated above, we begin by investigating ordinals and partial orders in $\Sets^\arr$.
We will write objects of $\Sets^\arr$ as $X = [X_1 \to X_0]$.
Since the functors $\ev_1, \ev_0 \colon \Sets^\arr \to \Sets$ preserve finite limits, we may similarly write any (stric) partial order $P$ in $\Sets^\arr$ as a map $[P_1 \to P0]$ of (strict) partial orders in $\Sets$.

The functor $\ev_0$ is moreover logical; so if $L$ is an ordinal in $\Sets^\arr$, then $L_0$ is an ordinal in $\Sets$.
More generally, any slice functor $\ev_0/X \colon \Sets^\arr / X \to \Sets / X_0$ is logical; so if $L \to B$ is a family of ordinals in $\Sets^\arr$, then $L_0 \to B_0$ is a family of ordinals $\la (L_0)_b\ |\ b \in B_0 \ra$.

\begin{lemma} \label{lemma:poset-blowup}
  For any ordinal $\alpha$ in $\Sets$, let $P_\alpha$ be the poset $[\alpha + 1 \to 1]$ in $\Sets^\arr$, and $f_\alpha$ the progressive endomap of $P_\alpha$ that acts as successor on $\alpha$, and as the identity elsewhere.  Then:
  \begin{enumerate}
    \item \label{p_alpha_cc} $P_\alpha$ is chain-complete in $\Sets^\arr$; and
    \item \label{p_needs_long} if $L \to B$ is any inhabited family of ordinals computing fixed points for $f_\alpha$, then $\sup_{b \in B_0} (L_0)_b > \alpha$.
    In other words, with $P_\alpha$, we have succeeded in blowing up the required length of $L_0$ to $\alpha + 1$, while holding $P_0$ constant at 1.
  \end{enumerate}
\end{lemma}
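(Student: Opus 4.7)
\emph{Proof plan.} The two parts split naturally.

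For part~(1), the approach is to exhibit chain-suprema componentwise. Any subobject $[C_1\to C_0]\subseteq P_\alpha$ in $\Sets^\arr$ is automatically an internal chain: both $\alpha+1$ and $1$ are totally ordered in $\Sets$, and in the presheaf topos $\Sets^\arr$ the Kripke--Joyal interpretation of the chain property $\forall x,y\in C.\, x\leq y\vee y\leq x$ reduces to its pointwise truth. Now $\alpha+1$ and $1$ are complete lattices in $\Sets$ and the unique structure map $\alpha+1\to 1$ trivially preserves arbitrary suprema, so the componentwise assignment $[C_1\to C_0]\mapsto [\sup_{\alpha+1} C_1\to *]$ defines a candidate operator $\sigma\colon \Ch{P_\alpha}\to P_\alpha$; I would then verify its internal universal property by reducing, stage by stage, to the classical universal property of sups in the components $\alpha+1$ and $1$.

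For part~(2), the plan is to push the iteration down to stage $1$ and analyse it as a classical transfinite iteration in $\Sets$. Both components of an ordinal in $\Sets^\arr$ are ordinals in $\Sets$ with strictly monotone structure map, so classically $(L_0)_{b_0}\geq (L_1)_{b_1}$. Because the sup operator on $P_\alpha$ is componentwise by part~(1), $\ev_1$ intertwines $\sigma$ (up to a projection onto the first component) with $\sup_{\alpha+1}$; applying $\ev_1$ to the defining recursion $\tilde f_b(y)=\sup_{x<y}f_\alpha(\tilde f_b(x))$ then yields the ordinary transfinite recursion for the iteration of $f_1=\ev_1(f_\alpha)$ along $(L_1)_{b_1}$---the map that acts as successor on $\alpha\subseteq\alpha+1$ and as identity at $\alpha$. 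A direct computation then gives $\ev_1(\tilde f_b)(\beta)=\min(\beta,\alpha)$.

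The final step is to argue that the supremum of this iteration equals the unique fixed point $\alpha\in\alpha+1$ only if the ordinal $(L_1)_{b_1}$ strictly exceeds $\alpha$---so that $\alpha$ actually appears as a value of $\ev_1(\tilde f_b)$, rather than merely being the limit of its range. Combined with $(L_0)_{b_0}\geq (L_1)_{b_1}$, and with internal inhabitation of $B$ giving external inhabitation of $B_0$ via $\ev_0$, this forces $\sup_{b\in B_0}(L_0)_b>\alpha$. The hard step is precisely the strict inequality: since a chain of length $\alpha$ sitting inside $\alpha+1$ already has supremum $\alpha$ in the limit-ordinal case, one must use that a fixed-point-valued supremum in $\Sets^\arr$ corresponds to a global element realising $(\alpha,*)$, which in turn requires the iteration to literally take the value $\alpha$ somewhere on $(L_1)_{b_1}$.
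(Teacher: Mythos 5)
Your treatment of part~(1) and the main computation in part~(2) are sound and run parallel to the paper's own proof. The paper likewise notes that every subobject of $P_\alpha$ is internally a chain and that suprema are computed componentwise via $\sup_1(S,T)=\sup S$ (though it derives chain-completeness more quickly from Lemma~\ref{lemma:chain-completeness-drops}, since $X \mapsto [X \to 1]$ is the direct image of a geometric morphism with inverse image $\ev_1$). For part~(2) the paper uses a truncated rank function $\rk\colon L_1 \to L_0 \to \alpha + \set{\top}$ and shows $\tilde f_1(x) \leq \rk(x)$ by induction, whereas you compute the stage-$1$ iteration on the fibre $(L_1)_{b}$ explicitly as $\beta \mapsto \min(\beta,\alpha)$ and then pass to $L_0$ via strict monotonicity of $L_1 \to L_0$; these are the same estimate in different clothing. (One small point: the element you need is $b \in B_1$, not merely a point of $B_0$; this is fine because epimorphisms in $\Sets^\arr$ are pointwise surjections, but it is $B_1$ that must be inhabited.)

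The genuine problem is your final step, the strict inequality. The claim that a fixed-point-valued supremum ``requires the iteration to literally take the value $\alpha$ somewhere on $(L_1)_{b_1}$'' is false: the supremum of a chain in $P_\alpha$ is produced by the map $\sup_1$ and need not be a member of the chain. Indeed, for limit $\alpha$ the family $[\alpha \to \alpha] \to 1$ (identity structure map) already computes fixed points for $f_\alpha$, since $\sup_{\beta < \alpha} \min(\beta,\alpha) = \alpha$, while $(L_0)_b = \alpha$; so $\sup_{b \in B_0}(L_0)_b > \alpha$ simply fails there, and no argument will rescue it. Your own computation correctly yields $(L_1)_{b} \geq \alpha$ and hence $(L_0)_{b\restrict{0}} \geq \alpha$, and this non-strict bound is also exactly what the paper's proof establishes (its last line concludes ``at least $\alpha$''). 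It is, moreover, all that Theorem~\ref{thm:not-enough-ordinals} uses, since there $\alpha$ is chosen strictly above $\sup_b \lambda_b$. So the right move is to weaken the conclusion to $\sup_{b \in B_0}(L_0)_b \geq \alpha$ and delete the attempted strictness argument, rather than to insist that the value $\alpha$ be attained.
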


\begin{proof}
Chain-completeness follows immediately from Lemma~\ref{lemma:chain-completeness-drops}, since $\alpha + 1$ is chain-complete in $\Sets$, and the functor sending a set $X$ to $[X \to 1]$ is the forward image of a geometric morphism, with inverse image $\ev_1$.

Explicitly, the object of chains in $P_\alpha$ is given by
\[ \Ch{P_\alpha} = \pow{P_\alpha} \iso [ \set{S,T \such S \subseteq \alpha + 1,\, T \subseteq 1,\, \im(S) \subseteq T} \to \pow{1} \]
and the supremum map $\sup \colon \Ch{P_\alpha} \to P_\alpha$ is given by
\[
  \tsup_1(S,T) = \sup S \in \alpha + 1 
  \qquad
  \tsup_0(T) = * \in 1.
  \]
With this in hand, suppose $L \to B$ is some inhabited family of ordinals computing fixed points for $f_\alpha$, and let $\tilde{f} \colon L \to P_\alpha$ denote the iteration of $f_\alpha$ along $L$.

For any ordinals $\alpha,\beta$ in $\Sets$, there is a canonical map of partial orders $\alpha \to \beta + \set{\top}$, the initial-segment embedding if $\alpha \leq \beta$, or stabilising at the top if $\alpha > \beta$.  (One may regard this as a truncated rank function.)  Viewing $L_0$ as the disjoint union of the ordinals $\la (L_0)_b \such b \in B_0 \ra$, we obtain a notion of $(\alpha + \set{\top})$-valued rank for elements of $L_0$, and hence of $L_1$:
\[ \rk \colon L_1 \to L_0 \to \alpha + \set{\top}. \]
This is very nearly strictly monotone: if $x < y$, then either $\rk(x) < \rk(y)$ or $\rk(x) = \rk(y) = \top$.

Now, we see that for every $x \in L_1$, $\tilde{f}_1(x) \leq \rk(x)$.  If $\rk(x) = \top$, this is trivial; otherwise, we work by induction on $\rk(x)$:
\begin{align*}
  \tilde{f}_1(x) &= \tsup_1 ( \set{ \tilde{f}_1(y) \such y < x } , \set{ \tilde{f}_0(z) \such z < x\restrict{0}} ) \\
                 &= \sup \set{ \tilde{f}_1(y) \such y < x } \\
                 &\leq \sup \set{ \rk y \such y < x } \quad \text{(by induction)} \\
                 &\leq \rk x.
\end{align*}

Now, by hypothesis, $\Sets^\arr$ validates ``for each $b \in B$, $\sup \set{ \tilde{f}(i) \such i \in L_b }$ is a fixed point of $f_\alpha$''.  Since $B$ is inhabited, there is some $b \in B_1$; so calculating as above, we see that 
\begin{align*}
\top &= \sup \set{\tilde{f}_1(x) \such x \in (L_1)_b} \\
     &\leq \sup \set{ \rk x \such x \in (L_1)_b } \\
     &\leq \sup \set{ \rk i \such i \in (L_0)_{b \restrict{0}}},
\end{align*}
whence the ordinal $(L_0)_{b \restrict{0}}$ must be at least $\alpha$, as desired.
\end{proof}

Finally, let us prove Theorem~\ref{thm:not-enough-ordinals}.
Let $L \to B$ be any inhabited family of ordinals in $\EBWPf$; we wish to show that $L$ does not compute fixed points for $f$.

Coonsider the logical morphism $F_1 \colon \EBWPf \to \Sets$ sending $(P,f)$ to the terminal poset $1$ and its unique endomap.  This sends $L$ to some inhabited family of ordinals $\la \lambda_b \such b \in F_1(B) \ra$; let $\alpha$ be an ordinal greater than the supremum of this family.

Now consider the logical morphism $F_{P_\alpha} \colon \EBWPf \to \Sets^\arr$, sending $(P,f)$ to $(P_\alpha,f_\alpha)$.  Since $\ev_0$ is a logical morphism and $(P_\alpha)_0 = 1$, the universal property of $\EBWPf$ enforces that $\ev_0 \circ F_{P_\alpha} = F_1$.  So, in particular, $F_{P_\alpha}(L)_0$ is again the family of ordinals $\la \lambda_b \such b \in F_1(B) \ra$, with supremum less than $\alpha$.

Thus, by Lemma \ref{lemma:poset-blowup}, the family of ordinals $F_{P_\alpha}(L)$ cannot compute fixed-points for $f_\alpha$ in $\Sets^\arr$.  So, since $F_{P_\alpha}$ is logical, $L$ cannot compute fixed points for $f$ in $\EBWPf$.  But $L$ was arbitrary; so no inhabited family of ordinals can suffice, and $\EBWPf$ does not have enough ordinals.

A word of caution is necessary here, however: all these negations have been \emph{external}, so for all we know it could still be the case that $\EBWPf$ validates the double-negated version ``$L$ does not fail to compute fixed points for $f$'', for some $L \to B$. 



\bibliographystyle{plain}
\bibliography{bw}

\end{document}